\newtheorem {theorem}{Theorem}[section]
\newtheorem {lemma}{Lemma}[section]
\theoremstyle{definition}
\newtheorem{definition}{Definition}[section]
\newtheorem{remark}{Remark}[section]
\def\ees{{\accent"5E e}\kern-.385em\raise.2ex\hbox{\char'23}\kern-.08em}
\def\EES{{\accent"5E e}\kern-.5em\raise.8ex\hbox{\char'23 }}
\def\ow{o\kern-.42em\raise.82ex\hbox{\vrule width .12em height .0ex depth .075ex \kern-0.16em \char'56}\kern-.07em}
\def\OW{o\kern-.460em\raise1.36ex\hbox{
\vrule width .13em height .0ex depth .075ex \kern-0.16em
\char'56}\kern-.07em}
\def\DD{D\kern-.7em\raise0.4ex\hbox{\char '55}\kern.33em}
\title[]{Nichtnegativstellens\"atze for definable functions in o-minimal structures}
\author{S\~i Ti\d{\^e}p \DD inh$^\dagger$}
\address{Institute of Mathematics, VAST, 18, Hoang Quoc Viet Road, Cau Giay District 10307, Hanoi, Vietnam}
\email{dstiep@math.ac.vn}
\author{TI\EES N-S\OW n Ph\d{a}m$^\ddagger$}
\address{Department of Mathematics, Dalat University, 1 Phu Dong Thien Vuong, Dalat, Vietnam}
\email{sonpt@dlu.edu.vn}
\date{ \today}
\subjclass[2010]{Primary 03C64, 14P10, 90C26; Secondary 12D15; 46E25}
\keywords{Nichtnegativstellens\"atze; o-minimal structures; definable functions; global optimality conditions; KKT~optimality conditions}
\begin{document}

\begin{abstract} 
This paper addresses to Nichtnegativstellens\"atze for definable functions in o-minimal structures on $(\mathbb{R}, +, \cdot).$ Namely, let 
$f, g_1, \ldots, g_l  \colon \mathbb{R}^n \to \mathbb{R}$ be definable $C^p$-functions ($p \ge 2$) and assume that $f$ is non-negative on $S := \{x \in \mathbb{R}^n \ | \ g_1(x) \ge 0, \ldots,  g_l(x) \ge  0 \}.$ Under some natural hypotheses on
zeros of $f$ in $S,$ we show that $f$ is expressible in the form $f = \phi_0 + \sum_{i = 1}^l \phi_i g_i,$ where each $\phi_i$ is a sum of squares of definable $C^{p - 2}$-functions. As a consequence, we derive global optimality conditions which generalize the Karush--Kuhn--Tucker optimality conditions for nonlinear optimization.
\end{abstract}

\maketitle

\section{Introduction}

A classical Positivstellens\"atz proved by Krivine~\cite{Krivine1964}, and independently by Stengle~\cite{Stengle1974}, states that a polynomial $f$ is non-negative over a basic closed semi-algebraic set 
$$S := \{x \in \mathbb{R}^n \ | \ g_1(x) \ge 0, \ldots, g_l(x) \ge 0\}$$ 
if and only if there exist an integer number $d \ge 0$ and polynomials $\psi, \phi$  in the preordering  generated by $g_1, \ldots, g_l$ over the sums of squares (of polynomials) such that
$$\psi f = \phi  + f^{2d}.$$
Note that, the denominator $\psi$ cannot be omitted (see \cite{Bochnak1998, Marshall2008}).

Schm\"udgen~\cite{Schmudgen1991} showed that if $S$ is compact and $f$ is strictly positive on $S,$ then no denominators are needed; that is, $\psi$ can be chosen as $1$ in the above expression. Moreover, under some more restrictive hypotheses on the $g_i,$ Putinar~\cite{Putinar1993} proved that the polynomial $f$ can be represented as
$$f = \phi_0 + \sum_{i = 1}^l \phi_i g_i,$$ 
where each $\phi_i$ is a sum of squares; that is, $f$ belongs to the quadratic module generated by the $g_i$'s over the sums of squares, rather than the preordering generated by them. With additional conditions on zeros of $f$ in $S,$ Scheiderer~\cite{Scheiderer2005} and Marshall~\cite{Marshall2006} showed that the above equation remains true if we replace the assumption ``$f$ is strictly positive on $S$'' by ``$f$ is non-negative on $S$''.

In \cite{Acquistapace2002} (see also \cite{Acquistapace2000, Acquistapace2009, Fischer2011, Fischer2013}),  Acquistapace, Andradas and Broglia established Positivstellens\"atze for differentiable functions in o-minimal structures. In particular, they proved that a function $f$ that is non-negative on a closed set (not necessarily compact)
$$S = \{x \in \mathbb{R}^n \ | \ g_1(x) \ge 0, \ldots, g_l(x) \ge 0\}$$ 
admits a representation of the form
$$\psi^2 f = \phi_0^2 + \sum_{i = 1}^l \phi_i^2 g_i.$$ 
Again, denominators are necessary. Indeed, for any $S$ with non-empty interior, there are definable functions that are non-negative over $S$ and do not belong to the preordering generated by $g_1, \ldots, g_l$ over the sums of squares; that is, the denominator $\psi$ in the above equation cannot be omitted; for more details, see \cite[Remark~3.9]{Acquistapace2002}.

This paper deals with Nichtnegativstellens\"atze (local and global) for definable functions of class $C^p$ ($p \ge 2$) in o-minimal structures on $(\mathbb{R},+,\cdot)$ without any compactness assumption. 
Indeed, let $f$ be a definable function, which is non-negative on a basic definable set 
$$S = \{x \in \mathbb{R}^n \ | \ g_1(x) \ge 0, \ldots, g_l(x) \ge 0\}.$$ 
We give natural sufficient conditions in terms of the first and second derivatives of $f$ at its zeros in $S,$ so that $f$ can be represented as
$$f = \phi_0 + \sum_{i = 1}^l \phi_i g_i,$$ 
where each $\phi_i$ is a sum of squares. Our proof is elementary, using Morse's lemma and partitions of unity. 

As a consequence, we obtain global optimality conditions which generalize the Karush--Kuhn--Tucker optimality conditions for nonlinear optimization.

We finish this section by noting that all the statements and proofs in this paper remain true if we remove in them the term ``definable''. Also, all the statements still hold if we replace $\mathbb{R}^n$ by any real (definable) manifold; however, to lighten the exposition, we do not pursue this extension here.

The rest of this paper is organized as follows. Section~\ref{SectionPreliminary} contains some properties of definable sets and functions in o-minimal structures. For the convenience of the reader, local optimality conditions in nonlinear programming theory are also recalled here. 
Nichtnegativstellens\"atze for definable functions (Theorems~\ref{Nichtnegativstellensatz-1} and~\ref{Nichtnegativstellensatz-2}) are established in Section~\ref{Section3}. Finally, global optimality conditions (Theorem~\ref{GlobalOptimalityConditions}) are presented in Section~\ref{Section4}.

\section{Preliminaries} \label{SectionPreliminary}

In this paper,  we deal with the Euclidean space $\mathbb{R}^n$ equipped with the usual scalar product $\langle \cdot, \cdot \rangle$ and the corresponding Euclidean norm $\| \cdot\|.$ Let $\mathbb{R}_{+}$ denote the set of positive real numbers. 
If $f$ is a function in $x,$ $\nabla f(x)$ (resp., $\nabla^2 f(x)$) denotes the gradient vector (resp., Hessian matrix) of $f$ at $x.$

\subsection{O-minimal structures}

The notion of o-minimality was developed in the late 1980s after it was noticed that many proofs of analytic and geometric properties of semi-algebraic sets and mappings can be carried over verbatim for sub-analytic sets and mappings. The reader is referred to \cite{Coste2000, Dries1998, Dries1996} for more details.

\begin{definition}{\rm
An {\em o-minimal structure} on $(\mathbb{R}, +, \cdot)$ is a sequence $\mathcal{D} := (\mathcal{D}_n)_{n \in \mathbb{N}}$ such that for each $n \in \mathbb{N}$:
\begin{itemize}
\item [(a)] $\mathcal{D}_n$ is a Boolean algebra of subsets of $\mathbb{R}^n$.
\item [(b)] If $X \in \mathcal{D}_m$ and $Y \in \mathcal{D}_n$, then $X \times Y \in \mathcal{D}_{m+n}.$
\item [(c)] If $X \in \mathcal{D}_{n + 1},$ then $\pi(X) \in \mathcal{D}_n,$ where $\pi \colon \mathbb{R}^{n+1} \to \mathbb{R}^n$ is the projection on the first $n$ coordinates.
\item [(d)] $\mathcal{D}_n$ contains all algebraic subsets of $\mathbb{R}^n.$
\item [(e)] Each set belonging to $\mathcal{D}_1$ is a finite union of points and intervals.
\end{itemize}
}\end{definition}

A set belonging to $\mathcal{D}$ is said to {\em definable} (in that structure). {\em Definable mappings} in structure $\mathcal{D}$ are mappings whose graphs are definable sets in $\mathcal{D}.$

Examples of o-minimal structures are
\begin{itemize}
\item the semi-algebraic sets (by the Tarski--Seidenberg theorem),
\item the globally sub-analytic sets, i.e., the sub-analytic sets of $\mathbb{R}^n$ whose (compact) closures in the real projective space $\mathbb{R}\mathbb{P}^n$ are sub-analytic (using Gabrielov's complement theorem).
\end{itemize}

In this paper, we fix an o-minimal structure on $(\mathbb{R}, +, \cdot).$ The term ``definable'' means definable in this structure.
We recall some useful facts which we shall need later.

\begin{lemma}\label{ConnectedComponents}
Every definable set has a finite number of connected components.
\end{lemma}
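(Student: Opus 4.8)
The plan is to deduce the statement from the Cell Decomposition Theorem, which is the central structural result for o-minimal structures and is available in all of the cited references \cite{Coste2000, Dries1998, Dries1996}. Recall that a \emph{cell} in $\mathbb{R}^n$ is built inductively: a cell in $\mathbb{R}^1$ is either a point or an open interval (possibly unbounded), and a cell in $\mathbb{R}^n$ is either the graph of a definable continuous function over a cell in $\mathbb{R}^{n-1}$, or the region strictly between two such graphs (allowing the values $\pm\infty$) over a cell in $\mathbb{R}^{n-1}$. The property I would really use is that every cell is definably homeomorphic to an open box $(0,1)^d$ for some $d \ge 0$ (with $(0,1)^0$ interpreted as a single point), and is therefore connected.

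First I would invoke cell decomposition for the given definable set $A$: restricting to $A$ a decomposition of $\mathbb{R}^n$ compatible with $A$, one obtains a finite partition $A = \bigsqcup_{i=1}^N C_i$ in which each $C_i$ is a cell. Since each $C_i$ is connected by the property just recalled, it is contained in exactly one connected component of $A$. Hence every connected component of $A$ is a union of some of the cells $C_i$ drawn from the finite family $\{C_1, \dots, C_N\}$, and there can be at most $N$ of them. This gives the desired finiteness, and the combinatorial last step is immediate once the decomposition is in hand.

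The main obstacle is therefore not this combinatorics but the Cell Decomposition Theorem itself. Proving it from the axioms~(a)--(e) requires the Monotonicity Theorem for definable functions of one variable together with a simultaneous induction on the ambient dimension $n$; the base case $n=1$ is exactly axiom~(e), while the inductive step uses the projection axiom~(c) to control the fibers of $A$ over $\mathbb{R}^{n-1}$. Since this machinery is developed in full detail in the cited texts, I would take it as given rather than reproduce it here. As a sanity check, note that for $n=1$ axiom~(e) already yields the conclusion directly, a finite union of points and intervals plainly having finitely many components.
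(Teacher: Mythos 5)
Your proof is correct. The paper gives no argument of its own for this lemma: it simply cites van den Dries--Miller (Properties~4.3 of \cite{Dries1996}), and the route you take --- cell decomposition compatible with the given set, connectedness of each cell, hence at most as many components as cells --- is exactly the standard proof behind that citation. One minor remark: the claim that every cell is definably homeomorphic to a box $(0,1)^d$ is stronger than what you need (it does hold here because the structure expands the real field, but over a general o-minimal ordered group it requires care); for the lemma it is cleaner to prove connectedness of cells directly by induction on $n$, since the graph of a definable continuous function over a connected cell is connected, as is the band between two such graphs. With that substitution your argument is the textbook one, and the final combinatorial step is sound as written.
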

\begin{proof}
See \cite[Properties~4.3]{Dries1996}.
\end{proof}

\begin{lemma} \label{MillerLemma}
Let $U$ be a definable open subset of $\mathbb{R}^n$ containing $0$ and $f \colon U \to \mathbb{R}$ be a definable $C^p$-function $(p \ge 1)$ with $f(0) = 0.$ Then there are definable $C^{p - 1}$-functions $f_i \colon U \to \mathbb{R}$ such that on $U$ we have 
$$f = x_1 f_1 + \cdots + x_n f_n.$$
\end{lemma}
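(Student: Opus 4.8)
The plan is to build the decomposition separately near the origin and away from it, and then glue the two pieces with a definable cut-off function. The point to watch throughout is \emph{definability}: in a general o-minimal structure integration is not a definable operation (for instance $\int_0^1 \frac{dt}{x+t}=\log\frac{x+1}{x}$ is not semi-algebraic), so I cannot simply \emph{define} the functions $f_i$ by the usual radial formula $f_i(x)=\int_0^1 \partial_i f(tx)\,dt$, which would leave the fixed structure. Instead I will exhibit each building block by an explicit definable formula and use an integral representation only as an analytic device to certify its $C^{p-1}$-regularity. This tension between definability and smoothness is the main obstacle, and the whole argument is organized around it.

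First I would fix $r>0$ with $\overline{B(0,2r)}\subseteq U$ and prove a one-variable parametrized version: if $g(s,y)$ is a definable $C^p$-function on a ball that vanishes on $\{s=0\}$, then $g=s\,q$ with $q$ definable and $C^{p-1}$. Here I would set $q(s,y):=g(s,y)/s$ for $s\neq 0$ and $q(0,y):=\partial_s g(0,y)$; this is definable because it is given by a definable formula. To see that $q$ is $C^{p-1}$, I would note the genuine equality of real numbers $q(s,y)=\int_0^1 \partial_s g(\sigma s,y)\,d\sigma$, valid on the ball by the fundamental theorem of calculus, whose right-hand side is $C^{p-1}$ by differentiation under the integral sign. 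The two descriptions give the same function, so $q$ is at once definable and $C^{p-1}$, and the non-definable integral is never used to \emph{define} anything.

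Next, on the ball $B(0,2r)$, which is star-shaped with respect to $0$ and stable under setting a coordinate to zero, I would peel off one variable at a time. Writing $f(x)=\bigl(f(x_1,x_2,\dots,x_n)-f(0,x_2,\dots,x_n)\bigr)+f(0,x_2,\dots,x_n)$, the first bracket equals $x_1 h_1$ by the parametrized lemma, while the second term is a definable $C^p$-function of $(x_2,\dots,x_n)$ vanishing at the origin, to which I apply the inductive hypothesis on $n$. This produces definable $C^{p-1}$-functions $h_1,\dots,h_n$ on $B(0,2r)$ with $f=\sum_{i=1}^n x_i h_i$ there.

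Finally I would globalize. On $U\setminus\{0\}$ the functions $\tilde h_i(x):=\frac{x_i}{\|x\|^2}f(x)$ are definable and $C^p$ and satisfy $\sum_{i=1}^n x_i\tilde h_i=f$; they fail to extend across $0$, but this is harmless. Choosing a definable $C^p$ cut-off $\rho(x)=\theta(\|x\|^2)$, with $\theta$ a semi-algebraic $C^p$-spline equal to $1$ for $\|x\|\le r/2$ and to $0$ for $\|x\|\ge r$, and setting $f_i:=\rho\,h_i+(1-\rho)\,\tilde h_i$, I obtain functions that are definable and $C^{p-1}$ on all of $U$: near $0$ each $f_i$ coincides with $h_i$, and outside $B(0,r)$ with $\tilde h_i$. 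Then $\sum_{i=1}^n x_i f_i=\rho f+(1-\rho)f=f$ on $U$, as required. The need for this gluing reflects the secondary difficulty that $U$ is not assumed star-shaped, which is precisely what rules out a single global radial formula and forces the partition-of-unity construction.
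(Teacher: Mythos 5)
Your proof is correct, but it cannot be compared line-by-line with the paper's, because the paper gives no argument for this lemma at all: its ``proof'' is a single citation to \cite[Lemma~A.6]{Peterzil2007}. What you have written is therefore a self-contained reconstruction of the cited result, and it gets the essential points right. The crux is exactly where you place it: in a general o-minimal structure one cannot \emph{define} $f_i(x):=\int_0^1 \partial_i f(tx)\,dt$, so the $f_i$ must be given by definable formulas (difference quotients, extended by the derivative on the zero locus), with the integral representation used only as a certificate of $C^{p-1}$-regularity; this separation of ``definability of the formula'' from ``smoothness of the function'' is the standard device in the o-minimal literature and is in the same spirit as the cited proof. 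Your three steps are all sound: the parametrized one-variable division $g=sq$ (the graph of $q$ is definable since $\partial_s g$ is definable, and $q$ coincides with $\int_0^1 \partial_s g(\sigma s,y)\,d\sigma$, hence is $C^{p-1}$ by differentiation under the integral sign); the induction on $n$ over a ball centered at $0$, legitimate because a ball is stable under $x\mapsto (0,x_2,\dots,x_n)$ and contains the segments used in the fundamental theorem of calculus; and the gluing, which is genuinely needed since $U$ need not be star-shaped, and which works because decompositions of $f$ against $x_1,\dots,x_n$ form an affine family, so the combination $f_i=\rho h_i+(1-\rho)\,x_i f/\|x\|^2$ again sums to $f$. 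Two small points deserve to be made explicit: the products $\rho h_i$ and $(1-\rho)\tilde h_i$ must be extended by zero to all of $U$, which is definable and $C^{p-1}$ precisely because of the support conditions $\mathrm{supp}\,\rho\subset \overline{B(0,r)}\subset B(0,2r)$ and $1-\rho\equiv 0$ on $B(0,r/2)$; and your cutoff $\theta$ can indeed be taken as a semialgebraic piecewise-polynomial $C^p$ spline for the given finite $p$ (a semialgebraic $C^\infty$ cutoff does not exist, but none is needed since $p$ is finite).
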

\begin{proof}
See \cite[Lemma~A.6]{Peterzil2007}.
\end{proof}

To simplify notation in what follows, the notation ``$F \colon (\mathbb{R}^n, x^*) \to (\mathbb{R}^m, y^*)$'' means that $F$ is a mapping from a definable open neighbourhood of $x^* \in \mathbb{R}^n$ into $\mathbb{R}^m$ with $F(x^*) = y^*.$ 

\begin{lemma} [Morse's lemma] \label{MorseLemma}
Let $U$ be a definable open subset of $\mathbb{R}^n$ containing $0$ and $f \colon U \to \mathbb{R}$ be a definable $C^{p}$-function $(p \ge 2)$. If $0$ is a non-degenerate critical point of $f$ (i.e., $\nabla f(0) = 0$ and the Hessian matrix $\nabla^2 f(0)$ of $f$ at $0$ is non-singular), then there is a definable $C^{p - 2}$-diffeomorphism $\Phi \colon (\mathbb{R}^n, 0) \to (\mathbb{R}^n, 0)$ such that
$$f \circ \Phi(y) = f(0) - y_1^2 - \cdots - y_{\ell}^2 +  y_{\ell + 1}^2 + \cdots + y_n^2,$$
where $\ell$ is the number of negative eigenvalues (multiplicity taken into account) of $\nabla^2 f(0).$
\end{lemma}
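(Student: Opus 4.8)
The plan is to follow the classical Morse reduction, taking care that every construction stays inside the fixed o-minimal structure and that the regularity drops by exactly two orders. I may assume $f(0) = 0$ after replacing $f$ by $f - f(0)$. Since $\nabla f(0) = 0$, two applications of Lemma~\ref{MillerLemma} produce a quadratic expansion with $C^{p-2}$ coefficients: applying the lemma to $f$ gives definable $C^{p-1}$-functions $f_i$ with $f = \sum_i x_i f_i$ and $f_i(0) = \partial_i f(0) = 0$; applying it again to each $f_i$ gives definable $C^{p-2}$-functions $h_{ij}$ with $f_i = \sum_j x_j h_{ij}$, hence $f(x) = \sum_{i,j} x_i x_j h_{ij}(x)$. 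After symmetrizing (setting $H_{ij} := \tfrac{1}{2}(h_{ij} + h_{ji})$) I obtain a symmetric matrix $H(x) = (H_{ij}(x))$ of definable $C^{p-2}$-functions with $f(x) = \langle x, H(x)\, x\rangle$ and $2H(0) = \nabla^2 f(0)$, so that $H(0)$ is nonsingular.

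Next I would diagonalize $H(x)$ by an inductive completion of squares. Suppose that in suitable definable $C^{p-2}$-coordinates one has $f = \pm z_1^2 \pm \cdots \pm z_{r-1}^2 + \sum_{i,j \ge r} z_i z_j H_{ij}(z)$, with $(H_{ij})$ symmetric of class $C^{p-2}$ and the trailing block $(H_{ij}(0))_{i,j \ge r}$ nonsingular. If necessary I first apply a constant linear change in the variables $z_r, \ldots, z_n$ to arrange $H_{rr}(0) \ne 0$, which is possible because a nonsingular symmetric matrix cannot have every such entry vanish. Then, as $|H_{rr}|$ is bounded away from $0$ near the origin, the function
$$u_r := \sqrt{|H_{rr}(z)|}\,\Bigl(z_r + \sum_{i > r} \tfrac{H_{ri}(z)}{H_{rr}(z)}\, z_i\Bigr)$$
is definable and of class $C^{p-2}$ there (the square root is semi-algebraic, hence definable in the fixed structure), and $z \mapsto (z_1, \ldots, z_{r-1}, u_r, z_{r+1}, \ldots, z_n)$ is a definable $C^{p-2}$-change of coordinates whose Jacobian at $0$ is triangular with nonzero determinant $\sqrt{|H_{rr}(0)|}$, hence a local diffeomorphism. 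Completing the square absorbs all terms involving $z_r$ into $\pm u_r^2$, leaving a tail in $z_{r+1}, \ldots, z_n$ whose matrix at $0$ is the Schur complement of $H_{rr}(0)$ and therefore remains nonsingular. Iterating $n$ times yields a definable $C^{p-2}$-diffeomorphism $\Psi \colon (\mathbb{R}^n, 0) \to (\mathbb{R}^n, 0)$ with $f \circ \Psi^{-1}(u) = \varepsilon_1 u_1^2 + \cdots + \varepsilon_n u_n^2$, where $\varepsilon_i \in \{\pm 1\}$; setting $\Phi := \Psi^{-1}$ and permuting the coordinates so that the negative signs come first gives the stated normal form.

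Finally I would count the negative signs. By Sylvester's law of inertia, the number of indices with $\varepsilon_i = -1$ equals the number of negative eigenvalues of the Hessian of $u \mapsto f \circ \Phi(u)$ at $0$. Since $\Phi$ is a diffeomorphism fixing $0$, the chain rule gives $\nabla^2(f \circ \Phi)(0) = D\Phi(0)^{\mathsf{T}}\, \nabla^2 f(0)\, D\Phi(0)$, which is congruent to $\nabla^2 f(0)$ and hence has the same number $\ell$ of negative eigenvalues.

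I expect the main obstacle to be careful bookkeeping rather than any deep difficulty: one must track that the regularity is exactly $C^{p-2}$ (one order is lost at each of the two applications of Lemma~\ref{MillerLemma}, while completing the square with the $C^{p-2}$-coefficient $\sqrt{|H_{rr}|}$ incurs no further loss), verify at every inductive step that the coordinate change is a genuine local diffeomorphism, and confirm that the trailing Hessian block stays nonsingular so the induction can proceed. Definability is preserved throughout because the fixed structure contains all semi-algebraic sets and is closed under the compositions, quotients, and square roots employed.
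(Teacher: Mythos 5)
The paper does not actually prove this lemma: its ``proof'' is the citation \cite[Lemma~A.7]{Peterzil2007} (with pointers to \cite{Hirsch1994} and \cite{Jongen2000}). Your argument is the standard definable adaptation of Milnor's completing-the-square proof, built on two applications of Lemma~\ref{MillerLemma} --- exactly the toolkit of the cited source --- so the route is the expected one rather than a genuinely different alternative. For $p \ge 3$ your proof is correct and complete: the expansion $f(x) = \langle x, H(x)\,x\rangle$ with $H$ symmetric, definable, of class $C^{p-2}$ and $2H(0) = \nabla^2 f(0)$; the inductive completion of squares, with the Schur complement keeping the trailing block nonsingular; the preservation of definability under square roots, quotients and compositions; and the inertia count at the end are all sound.

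The gap is the case $p = 2$, which the statement explicitly allows (and which the paper's Theorems~\ref{Nichtnegativstellensatz-1} and~\ref{Nichtnegativstellensatz-2} need, since they assume only $p \ge 2$). In that case $C^{p-2} = C^0$, so the entries $H_{ij}$ are merely continuous, the map $z \mapsto (z_1, \ldots, z_{r-1}, u_r, z_{r+1}, \ldots, z_n)$ is merely continuous, it has no Jacobian on a neighbourhood of $0$, and the inverse function theorem --- which requires $C^1$ regularity --- cannot be invoked to conclude that it is a local homeomorphism (that is, a $C^0$-diffeomorphism). This is not a matter of wording: a continuous map that is differentiable at the single point $0$ with invertible differential there need not be injective on any neighbourhood of $0$, so local invertibility genuinely requires a separate argument when $p = 2$ (for instance, o-minimal monotonicity and cell-decomposition arguments combined with invariance of domain, or an entirely different proof). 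A related but milder issue occurs at the end: when $p = 3$ the diffeomorphism $\Phi$ is only $C^1$, so the identity $\nabla^2(f\circ\Phi)(0) = D\Phi(0)^{\mathsf T}\,\nabla^2 f(0)\, D\Phi(0)$ cannot be obtained by differentiating twice through the chain rule; it instead follows from uniqueness of second-order Taylor expansions, using that $f\circ\Phi$ is exactly a quadratic form --- worth a sentence, but easily repaired. As written, then, your proof establishes the lemma for $p \ge 3$ and leaves the boundary case $p = 2$ open.
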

\begin{proof}
Confer \cite[Lemma~A.7]{Peterzil2007} (see also \cite[Section~6.1]{Hirsch1994} and \cite[Theorem~2.8.2]{Jongen2000}).
\end{proof}
 
\begin{lemma} [Definable partition of unity] \label{PartitionUnity}
Let $\{U_k\}_{k = 1, \ldots, K}$ be a finite definable open covering of $\mathbb{R}^n.$ 
Then for any $p \ge 0$, there exist definable $C^p$-functions $\theta_k \colon \mathbb{R}^n \to [0, 1], k = 1, \ldots, K,$ such that the following statements hold:
\begin{enumerate}[{\rm (i)}]
\item $\mathrm{supp\,}\theta_k \subset U_k,$  where $\mathrm{supp\,} \theta_k$ denotes the closure of the set $\{x \in \mathbb{R}^n \ | \ \theta_k(x) \ne 0\};$
\item $\sum_{k = 1}^K [\theta_k(x)]^2 = 1$ for all $x \in \mathbb{R}^n.$
\end{enumerate}
 \end{lemma}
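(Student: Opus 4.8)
The plan is to reduce the statement to the construction of suitable nonnegative definable $C^p$ bump functions and then to normalise them. Concretely, suppose we can produce definable $C^p$-functions $\chi_k \colon \mathbb{R}^n \to [0,\infty)$, $k = 1, \dots, K$, such that $\mathrm{supp\,}\chi_k \subset U_k$ and $\sum_{k=1}^K \chi_k(x) > 0$ for every $x \in \mathbb{R}^n$. Since each $\chi_k$ is nonnegative and their sum is strictly positive, the function $D(x) := \sum_{j=1}^K [\chi_j(x)]^2$ is definable, of class $C^p$, and strictly positive on all of $\mathbb{R}^n$; as $t \mapsto \sqrt{t}$ is definable and $C^\infty$ on $(0,\infty)$, the composite $\sqrt{D}$ is again definable and of class $C^p$. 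Setting $\theta_k := \chi_k / \sqrt{D}$ then yields definable $C^p$-functions with values in $[0,1]$ (because $\theta_k^2 \le D/D = 1$), with $\mathrm{supp\,}\theta_k = \mathrm{supp\,}\chi_k \subset U_k$ since $\sqrt{D}$ never vanishes, and with $\sum_{k=1}^K [\theta_k(x)]^2 = D(x)/D(x) = 1$. This is exactly (i)--(ii), so the whole problem is the construction of the $\chi_k$.

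For that construction I would first shrink the cover. For each $k$ let $d_k(x) := \mathrm{dist}(x, \mathbb{R}^n \setminus U_k)$, which is a definable, continuous (indeed $1$-Lipschitz) function with $d_k(x) > 0$ if and only if $x \in U_k$. Since $\{U_k\}$ covers $\mathbb{R}^n$, the function $m(x) := \max_{1 \le k \le K} d_k(x)$ is definable, continuous and strictly positive, and the definable open sets $V_k := \{x : d_k(x) > \tfrac12 m(x)\}$ still cover $\mathbb{R}^n$ while satisfying $\overline{V_k} \subset U_k$ (on $\overline{V_k}$ one has $d_k \ge \tfrac12 m > 0$). A second shrinking, for instance $W_k := \{x : \mathrm{dist}(x, \overline{V_k}) < \mathrm{dist}(x, \mathbb{R}^n \setminus U_k)\}$, produces definable open sets with $\overline{V_k} \subset W_k$ and $\overline{W_k} \subset U_k$.

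The remaining, and genuinely delicate, step is to turn the geometric data $\overline{W_k} \subset U_k$ into a \emph{$C^p$} bump. Here I would invoke the fact that in any o-minimal structure every definable closed set is the zero set of some nonnegative definable $C^p$-function. Applying this to the closed set $\mathbb{R}^n \setminus W_k$ gives a definable $C^p$-function $\chi_k \ge 0$ with $\{\chi_k > 0\} = W_k$, hence $\mathrm{supp\,}\chi_k = \overline{W_k} \subset U_k$ and $\chi_k > 0$ on $V_k \subset W_k$; since the $V_k$ cover $\mathbb{R}^n$, we get $\sum_k \chi_k > 0$ everywhere, as required.

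I expect this last step to be the main obstacle. The naive candidates one writes down from the distance functions, such as powers of $d_k$ or compositions $(\max\{d_k - \tfrac12 m, 0\})^{p+1}$, are only Lipschitz and fail to be $C^p$ along the ridges of the distance function, so the regularity cannot be obtained by hand. The honest route goes through definable $C^p$ cell decomposition (available precisely because $p$ is finite), which both guarantees definability throughout -- all of the sets and functions above are built by definable operations -- and supplies definable $C^p$-functions with prescribed definable zero sets. It is the finiteness of $p$ that makes the bump functions exist inside the structure, since a $C^\infty$ construction would call for genuinely flat functions of the type $e^{-1/t}$ that need not be definable in the fixed o-minimal structure.
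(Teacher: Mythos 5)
Your proposal is correct, but it works one level deeper than the paper does. The paper's entire proof consists of citing the existence of a definable $C^p$ partition of unity $\{\phi_k\}$ subordinate to the covering $\{U_k\}$ (Escribano's thesis, Hirsch, van den Dries) and then setting $\theta_k := \phi_k/\sqrt{\sum_{j} \phi_j^2}$ --- exactly the normalization trick you use at the start. Where the paper treats the subordinate family as a black box, you essentially reprove its existence: you shrink the cover with distance functions and then invoke the theorem that every definable closed subset of $\mathbb{R}^n$ is the zero set of a nonnegative definable $C^p$-function. That invoked fact is genuine and standard (it is, for instance, in van den Dries--Miller's \emph{Geometric categories and o-minimal structures}, and it is the usual engine behind the partition-of-unity results the paper cites), so your argument is sound; it is more self-contained about how the bump functions are manufactured, but the black box it rests on is of essentially the same depth as the one the paper cites, so nothing is gained in rigor --- only in transparency. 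Two minor points: your second shrinking $W_k$ is superfluous, since applying the zero-set theorem to $\mathbb{R}^n \setminus V_k$ already yields $\chi_k$ with $\mathrm{supp\,}\chi_k = \overline{V_k} \subset U_k$ and $\chi_k > 0$ on the covering sets $V_k$; and if some $U_k = \mathbb{R}^n$ then $d_k \equiv +\infty$, so one should truncate the distance functions (e.g.\ replace $d_k$ by $\min\{d_k, 1\}$) before forming $m$ and $V_k$.
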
 

\begin{proof}
It is well-known that (see, for example, \cite[Theorem~3.4.2]{Escribano2000}, \cite[Theorem~2.1]{Hirsch1994}, \cite[Lemma~3.7]{Dries1998}), there exists a definable partition of unity $\{\phi_k\}_{k = 1, \ldots, K}$ subordinated to the covering $\{U_k\}_{k = 1, \ldots, K}.$ Clearly, the functions $\theta_k := \frac{\phi_k}{\sqrt{\sum_{k = 1}^K \phi_k^2}}$ have the desired properties.
\end{proof}

\subsection{Optimality conditions}
We give here a short review of optimality conditions in nonlinear programming theory (confer \cite[Section~4.3]{Bertsekas2016}).

Let $f, g_1, \ldots, g_l, h_1, \ldots, h_m  \colon \mathbb{R}^n \to \mathbb{R}$ be $C^p$-functions ($p \ge 2$) and assume that
$$S := \{x \in \mathbb{R}^n \ | \ g_1(x) \ge 0, \ldots,  g_l(x) \ge  0, h_1(x)  = 0, \ldots, h_m(x)  = 0 \} \ne \emptyset.$$

\begin{definition} {\rm 
The constraint set $S$ is said to be {\em regular at $x \in S$} if the gradient vectors $\nabla g_i(x), \ i \in I(x)$ and $\nabla h_j(x), \ j = 1, \ldots, m,$ are linearly independent, where 
$$I(x) := \{i \in \{1, \ldots, l\} \ | \ g_i(x) = 0 \}$$
is called the {\em set of active constraint indices at $x.$} The set $S$ is called {\em regular} if it is regular at every point $x \in S.$
}\end{definition}

Let $x^*$ be a local minimizer of the restriction of $f$ on $S$ and assume that $S$ is regular at $x^*.$ It is well-known that there exist (unique) Lagrange multipliers $\lambda_i, i = 1, \ldots, l,$ and $\nu_j, j = 1, \ldots, m,$ satisfying the {\em Karush--Kuhn--Tucker optimality conditions} (KKT~optimality conditions for short)
\begin{eqnarray*}
&&  \nabla  f(x^*)  - \sum_{i = 1}^{l} \lambda_i \nabla g_i({x^*}) - \sum_{j = 1}^{m} \nu_j \nabla  h_j({x^*})  = 0, \\
&&  \lambda_i g_i(x^*) = 0, \ \lambda_i \ge 0, \ \textrm{ for } \ i = 1, \ldots, l.
\end{eqnarray*}
Recall that the {\em strict complementarity condition} holds at $x^*$ if it holds that
$$\lambda_1 + g_1(x^*) > 0, \ldots, \lambda_l + g_l(x^*) > 0.$$
Note that strict complementarity is equivalent to $\lambda_i > 0$ for every $i \in I(x^*).$

Let ${L}(x)$ be the associated Lagrangian function
$${L}(x) := f({x}) - \sum_{i \in I(x^*)} \lambda_i g_i({x}) - \sum_{j = 1}^m \nu_j h_j({x}),$$
where $I(x^*)$ is the set of active constraint indices at $x^*.$ Then the {\em second-order necessity condition} holds at $x^*,$ that is
$$v^T \nabla^2 L(x^*) v \ge  0 \quad \textrm{ for all } \quad v \in T_{x^*}S.$$
Here $ \nabla^2 L(x^*)$ is the Hessian matrix of $L$ at $x^*$ and $T_{x^*}S$ stands for the {\em (generalized) tangent space} of $S$ at $x^*$:
$$T_{x^*}S := \left\{
\begin{array}{lll}
v \in \mathbb{R}^n & : & \langle v, \nabla g_{i}(x^*) \rangle = 0, \ i \in I(x^*) \ \textrm{ and } \\
&& \langle v, \nabla h_{j}(x^*) \rangle = 0, \ j = 1, \ldots, m 
\end{array}
\right\}.
$$
If it holds that
$$v^T \nabla^2 L(x^*) v >  0 \quad \textrm{ for all } \quad v \in T_{x^*}S\setminus\{ 0\},$$
we say the {\em second-order sufficiency condition} holds at $x^*.$

\begin{remark}
(i) Let $x^* \in S$ be a local minimizer of $f$ on $S$ and assume that $S$ is regular at $x^*.$ According to \cite[Lemma~3.2.16]{Jongen2000}, the point $x^*$ is a {\em (generalized)  non-degenerate critical point} of the restriction of $f$ to $S$ if and only if the strict complementarity and second-order sufficiency conditions hold at $x^*.$

(ii) Using transversality arguments, one can show that the regularity, strict complementarity and second order sufficiency conditions hold generically. For related works, see \cite{HaHV2017, Nie2014}.
\end{remark}

\section{Nichtnegativstellens\"atze for definable functions} \label{Section3}

In this section we prove two Nichtnegativstellens\"atze for definable functions. So let $f, g_1, \ldots, g_l, h_1, \ldots, h_m  \colon \mathbb{R}^n \to \mathbb{R}$ be definable $C^p$-functions ($p \ge 2$) and assume that
$$S := \{x \in  \mathbb{R}^n \ | \ g_1(x) \ge 0, \ldots,  g_l(x) \ge  0, h_1(x)  = 0, \ldots, h_m(x)  = 0 \} \ne \emptyset.$$
The first main result of the paper reads as follows.

\begin{theorem}[Local Nichtnegativstellens\"atz] \label{Nichtnegativstellensatz-1}
Assume that $f$ is non-negative on $S$ and let $x^* \in S$ be a zero of $f$ in $S.$ If $S$ is regular at $x^*$ and the strict complementarity and second-order sufficiency conditions hold at $x^*,$ then there are a definable open neighbourhood $U$ of $x^*$ and definable $C^{p - 2}$-functions $\phi_i, \psi_j \colon U \rightarrow \mathbb{R}$ for $i = 0, \ldots, l$ and $j = 1, \ldots, m,$ where each $\phi_i$ is a sum of squares of definable $C^{p - 2}$-functions, such that on $U$ we have
\begin{eqnarray*}
f  &=& \phi_0 + \sum_{i = 1}^l \phi_i g_i + \sum_{j = 1}^m \psi_j h_j.
\end{eqnarray*}
\end{theorem}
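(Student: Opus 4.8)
The plan is to flatten the active constraints into coordinate functions, to peel off the restriction of $f$ to the resulting constraint submanifold by Morse's lemma, and then to expand the remainder linearly in the active constraints by Hadamard's lemma, where strict complementarity forces the coefficients to be positive. First I would record the first- and second-order data at $x^*$. Since $f \ge 0$ on $S$ and $f(x^*) = 0$, the point $x^*$ is a local minimizer of $f$ on $S$; regularity yields the KKT conditions, so there are multipliers $\lambda_i \ge 0$ and $\nu_j$ with $\nabla f(x^*) = \sum_{i \in I(x^*)} \lambda_i \nabla g_i(x^*) + \sum_{j=1}^m \nu_j \nabla h_j(x^*)$ and, by strict complementarity, $\lambda_i > 0$ for every $i \in I(x^*)$. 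The inactive constraints satisfy $g_i(x^*) > 0$, hence remain positive after shrinking the neighbourhood and are absorbed by taking $\phi_i := 0$; it therefore suffices to treat the active indices.

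Next I would introduce local coordinates. Writing $I = I(x^*)$, $s = |I|$ and $r = s + m$, the gradients $\nabla g_i(x^*)$ $(i \in I)$ and $\nabla h_j(x^*)$ are linearly independent, so I can complete the map $x \mapsto \bigl((g_i(x))_{i \in I}, (h_j(x))_{j}\bigr)$ by $n - r$ affine functions to a definable $C^p$-diffeomorphism $\Psi$ from a neighbourhood of $x^*$ onto a neighbourhood of the origin, with coordinates $(a, b, z) \in \mathbb{R}^s \times \mathbb{R}^m \times \mathbb{R}^{n-r}$ such that $a_i = g_i$ $(i \in I)$ and $b_j = h_j$. I set $\tilde f := f \circ \Psi^{-1}$, a definable $C^p$-function, and let $M := \{a = 0,\ b = 0\}$ be the constraint submanifold, so that $T_{x^*}M = T_{x^*}S$.

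The central computation is to analyse the restriction $\varphi(z) := \tilde f(0, 0, z)$, which represents $f|_M$ in these coordinates. From the KKT relation the $z$-component of $\nabla \tilde f(0)$ vanishes, so $\varphi(0) = 0$ and $\nabla \varphi(0) = 0$. The key point is that $\nabla^2 \varphi(0)$ is positive definite: because $\nabla f(x^*)$ is a combination of the constraint normals, the second fundamental form terms in the Hessian of the restriction cancel and $\mathrm{Hess}(f|_M)(x^*)(v,v) = v^T \nabla^2 L(x^*) v$ for $v \in T_{x^*}S$, which is strictly positive by the second-order sufficiency condition. Hence $0$ is a non-degenerate local minimum of $\varphi$, and Morse's lemma (Lemma~\ref{MorseLemma}) provides a definable $C^{p-2}$-change of coordinates turning $\varphi$ into $\chi_1^2 + \cdots + \chi_{n-r}^2$; thus $\varphi$ is a sum of squares of definable $C^{p-2}$-functions, which I take as $\phi_0$.

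Finally I would expand the remainder. Applying Hadamard's lemma (Lemma~\ref{MillerLemma}) in the variables $(a,b)$ with $z$ as a parameter to $\tilde f(a,b,z) - \varphi(z)$, which vanishes on $M$, produces definable $C^{p-1}$-functions $P_i, Q_j$ with $\tilde f(a,b,z) = \varphi(z) + \sum_{i \in I} a_i P_i + \sum_{j=1}^m b_j Q_j$. Differentiating at the origin gives $P_i(0) = \partial_{a_i}\tilde f(0) = \lambda_i > 0$, so after shrinking the neighbourhood each $P_i$ is positive and $P_i = (\sqrt{P_i})^2$ is a square of a definable $C^{p-1}$-function (hence of class $C^{p-2}$). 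Setting $\phi_i := P_i \circ \Psi$ for $i \in I$, $\phi_i := 0$ otherwise, and $\psi_j := Q_j \circ \Psi$, and recalling $a_i = g_i$ and $b_j = h_j$, the pullback by $\Psi$ yields $f = \phi_0 + \sum_{i=1}^l \phi_i g_i + \sum_{j=1}^m \psi_j h_j$ near $x^*$. I expect the main obstacle to be the Hessian identity $\mathrm{Hess}(f|_M)(x^*) = \nabla^2 L(x^*)|_{T_{x^*}S}$, since this is exactly where the second-order sufficiency hypothesis is converted into the non-degeneracy required by Morse's lemma; a secondary point is justifying the parametrised form of Hadamard's lemma and tracking the attendant loss of differentiability.
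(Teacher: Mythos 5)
Your proposal is correct and follows essentially the same route as the paper's proof: straighten the active constraints into coordinates via the inverse function theorem, apply Morse's lemma to the restriction of $f$ to the constraint manifold (with non-degeneracy supplied by the second-order sufficiency condition through the identity $\mathrm{Hess}(f|_M)(x^*) = \nabla^2 L(x^*)|_{T_{x^*}S}$), and expand the remainder in the constraint coordinates by the definable Hadamard-type lemma, strict complementarity making the coefficients positive and hence squares after shrinking. The only differences are organizational --- the paper verifies the Hessian identity by an explicit chain-rule computation on the Lagrangian in the new coordinates rather than by the second-fundamental-form cancellation, and it performs the Hadamard expansion in two stages (inequality coordinates, then equality coordinates) and composes the coefficients with the Morse diffeomorphism --- so the two arguments coincide in substance.
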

\begin{proof}
(cf. \cite{Jongen2000, Nie2014}). For convenience, we can generally assume $x^* = 0,$ up to a shifting.

Recall that $I(x^*) := \{ i \in \{1, \ldots, l\} \ | \ g_i(x^*) = 0\}$ is the index set of inequality constraints that are active at $x^*.$ 
By renumbering, we may assume that $I(x^*) := \{1, \ldots, k\}.$ Since $S$ is regular at $0,$ the gradient vectors
$$\nabla g_{1}(0), \ldots, \nabla g_{k}(0), \nabla h_{1}(0), \ldots, \nabla h_{m}(0)$$
are linearly independent. Let $d := n - m - k.$ Up to a linear coordinate transformation, we can further assume that
$$\begin{array}{lcllcllcl}
\nabla g_{1}(0) &=& e^{d + 1}, & \ldots, & \nabla g_{k}(0) & = &  e^{d + k},\\
\nabla h_{1}(0) &=& e^{d + k + 1}, & \ldots, & \nabla h_{m}(0)  & = &  e^{n},\\
\end{array}$$
where $e^1, \ldots, e^n$ are the canonical basis vectors in $\mathbb{R}^n.$ Note that the space $T_{x^*}S$ is determined by the vectors
$e^{1}, \ldots, e^d.$

Define the definable $C^p$-mapping $\Phi \colon \mathbb{R}^n \longrightarrow \mathbb{R}^n, x \mapsto \Phi(x),$ by 
$$\Phi(x) :=  (x_1, \ldots, x_d, g_{1}(x), \ldots, g_{k}(x), h_{1}(x), \ldots, h_{m}(x)).$$
Clearly, $\Phi(0) = 0$ and the Jacobian matrix $D\Phi(0)$ of $\Phi$ at $0$ is the identity matrix $I_n.$ Thus, by the inverse function theorem, $\Phi$ is 
a local $C^p$-diffeomorphism in some neighbourhood of $0$ with the inverse
$$\Phi^{-1} \colon (\mathbb{R}^n, 0) \to (\mathbb{R}^n, 0), \quad t \mapsto x := \Phi^{-1}(t).$$ 
So, $t := (t_1, \ldots ,t_n)$ can serve as a coordinate system for $\mathbb{R}^n$ around $0.$ In the $t$-coordinate system and in a neighborhood of $0,$ the set $S$ defined by 
$$t_{d + 1} \geq 0, \ldots, t_{d + k} \geq 0, t_{d + k + 1}  = 0, \ldots, t_n = 0.$$

Let $\lambda_i$ and $\nu_j$  be the Lagrange multipliers with respect to the minimizer $x^*.$ Define the Lagrangian function
$$L(x):= f(x) - \sum_{i \in I(x^*)} \lambda_{i} g_{i} (x) - \sum_{j = 1}^m \nu_{j}  h_{j}(x).$$
Note that $\nabla L(0)=0.$ In the $t$-coordinate system, define the functions
\begin{eqnarray*}
F(t) &:=& f(\Phi^{-1} (t)), \\
\widehat{L} (t) &:=&  L(\Phi^{-1} (t) ) = F(t) - \sum_{r = d + 1}^{d + k} \lambda_{{r - d}} t_r - \sum_{r = d + k + 1}^n \nu_{r - d - k} t_r.
\end{eqnarray*}
Clearly, 
$$\nabla \widehat{L} (0) = \nabla L(0) D \Phi^{-1}(0) =  \nabla L(0)  = 0.$$ 
This implies that
$$ \frac{\partial F }{\partial t_r}(0) = 
 \begin{cases}
 0 & \textrm{ if }   r = 1, \ldots , d, \\
 \lambda_{{r - d}} & \textrm{ if }   r = d +1 , \ldots , d + k, \\
 \nu_{r - d - k} &  \textrm{ if }  r = d + k + 1, \ldots , n. \\
\end{cases}$$
Furthermore, for $(t_{1}, \ldots, t_d)$ near $0 \in \mathbb{R}^d,$ it holds that
\begin{eqnarray*}
F(t_{1}, \ldots, t_d, 0, \ldots , 0) & =& \widehat{L} (t_{1}, \ldots, t_d, 0, \ldots , 0) , \\
& = & L(\Phi^{-1}(t_{1}, \ldots, t_d, 0, \ldots , 0) ).
\end{eqnarray*}
Let $x(t) := \Phi^{-1}(t)=(\Phi_1^{-1} (t), \ldots , \Phi_n^{-1} (t)).$ For all $i, j$, we have
$$\frac{\partial^2 \widehat{L}(t)}{\partial t_i \partial t_j} = \sum_{1 \leq r, s  \leq n  }  \frac{ \partial^2 L(x(t)) }{\partial x_r \partial x_s } .  \frac{\partial \Phi_{r}^{-1} (t) }{\partial t_i }.  \frac{\partial \Phi_{s}^{-1} (t) }{\partial t_j }  + \sum_{1 \leq r  \leq n  }  \frac{ \partial L(x(t)) }{\partial x_r  } .  \frac{\partial^2 \Phi_{r}^{-1} (t) }{\partial t_i  \partial t_j }. $$
Note that $\nabla L(0)=0$ and $x(0) = \Phi^{-1}(0) = 0.$ Hence
$$\frac{ \partial^2 \widehat{L} }{\partial t_i \partial t_j }(0)  = \sum_{1 \leq r, s  \leq n  }  \frac{ \partial^2 L }{\partial x_r \partial x_s }(0) .  \frac{\partial \Phi_{r}^{-1} }{\partial t_i }(0). \frac{\partial \Phi_{s}^{-1}  }{\partial t_j }(0).$$
On the other hand, we have $D \Phi(0) = D \Phi^{-1} (0) = I_n$-the identity matrix. Therefore for all  $i, j = 1, \ldots, d,$ 
$$\frac{\partial^2 F}{\partial t_i \partial t_j}  \bigg \arrowvert_{t = 0} =
\frac{ \partial^2 \widehat{L}}{\partial t_i \partial t_j }\bigg \arrowvert_{t = 0} 
= \frac{\partial^2 L}{\partial  x_i \partial x_j}  \bigg\arrowvert_{x=0}.$$
Since the vector space $T_{x^*}S$ is defined by the vectors $e^{1}, \ldots, e^d,$ the second-order sufficiency condition implies that the sub-Hessian
$$\left( \frac{ \partial^2 {L}  }{\partial x_i \partial x_j }(0) \right)_{1 \leq i, j \leq d}$$
is positive definite. 

Define the definable $C^p$-function $A \colon (\mathbb{R}^d, 0) \to (\mathbb{R}, 0),$ by 
$$A(t_1, \ldots, t_d) := F(t_1, \ldots, t_d, 0, \ldots, 0).$$
Then $A(0) = 0, \nabla A(0) = 0$ and the Hessian matrix $\nabla^2 A(0)$ is positive define. On the other hand, by Lemma~\ref{MillerLemma}, there exist definable $C^{p - 1}$-functions $B_r \colon (\mathbb{R}^{d + k}, 0) \to (\mathbb{R}, 0)$ for $r = 1, \ldots, k,$ and $C_r \colon (\mathbb{R}^{n}, 0) \to (\mathbb{R}, 0)$ for $r = 1, \ldots, m$ such that
\begin{eqnarray*}
F(t_1, \ldots, t_{d + k}, 0, \ldots, 0) &=& F(t_1, \ldots, t_d, 0, \ldots, 0) + \sum_{r = d + 1}^{d + k} t_r B_{r - d} (t_1, \ldots, t_{d + k}), \\
F(t_1, \ldots, t_n) &=& F(t_1, \ldots, t_{d + k}, 0, \ldots, 0) + \sum_{r = d + k + 1}^{n} t_r C_{r - d - k} (t_1, \ldots, t_n).
\end{eqnarray*}
Then for all $t := (t_1, \ldots, t_n) \in \mathbb{R}^n$ near $0,$ we have
\begin{eqnarray*}
F(t_1, \ldots, t_n) &=& A(t_1, \ldots, t_d) + \sum_{r = d + 1}^{d + k} t_r B_{r - d} (t_1, \ldots, t_{d + k}) + \sum_{r = d + k + 1}^{n} t_r C_{r - d - k} (t_1, \ldots, t_n).
\end{eqnarray*}

In view of Lemma~\ref{MorseLemma} (applied to the function $A$), there is a definable $C^{p - 2}$-diffeomorphism 
$$\theta \colon (\mathbb{R}^{d}, 0) \to (\mathbb{R}^{d}, 0), \quad (t_1, \ldots, t_d) \mapsto (z_1, \ldots, z_d),$$ 
such that
\begin{eqnarray*}
A\circ \theta^{-1} (z_1, \ldots, z_d) &=& \sum_{r = 1}^d z_r^2.
\end{eqnarray*}
We extend $\theta$ to a definable $C^{p - 2}$-diffeomorphism 
$$\Theta \colon (\mathbb{R}^{n}, 0) \to (\mathbb{R}^{n}, 0), \quad t \mapsto z := \Theta(t),$$
by putting
\begin{eqnarray*}
\Theta(t_1, \ldots, t_n) &:=& (\theta(t_1, \ldots, t_d), t_{d + 1}, \ldots, t_n).
\end{eqnarray*}
Next we put
\begin{eqnarray*}
\widetilde{B}_i (z_1, \ldots, z_{d + k}) &:=& B_i(\theta^{-1}(z_1, \ldots, z_d), z_{d + 1}, \ldots, z_{d + k}), \quad i = 1, \ldots, k, \\
\widetilde{C}_j (z_1, \ldots, z_n) &:=& C_j(\theta^{-1}(z_1, \ldots, z_d), z_{d + 1}, \ldots, z_{n}), \quad j = 1, \ldots, m.
\end{eqnarray*}
Clearly, $\widetilde{B}_i$ and $\widetilde{C}_j$ are definable $C^{p - 2}$-functions, $\widetilde{B}_i (0) = B_i(0) = \lambda_{{i}} > 0.$ 
In particular, in some neighbourhood of $0 \in \mathbb{R}^{d + k},$ the functions $\widetilde{B}_i$ are squares of definable $C^{p - 1}$-functions.
For all $z := (z_1, \ldots, z_n) \in \mathbb{R}^n$ near $0,$ we have
\begin{eqnarray*}
f \circ (\Theta \circ \Phi) ^{-1}(z)
&=& F \circ \Theta^{-1}(z) \\
&=& \sum_{r = 1}^d z_r^2 + \sum_{r = d + 1}^{d + k} z_r \widetilde{B}_{r - d}(z_1, \ldots, z_{d + k}) + 
\sum_{r = d + k + 1}^{n} z_r \widetilde{C}_{r - d - k}(z).
\end{eqnarray*}

Let $x := (\Theta \circ \Phi) ^{-1}(z);$ or equivalently, 
$$z = (\theta(x_1, \ldots, x_d), g_{1}(x), \ldots, g_{k}(x), h_{1}(x), \ldots, h_{m}(x)).$$
Then it is easy to see that the functions
\begin{eqnarray*}
\phi_0(x) &:=& \|\theta(x_1, \ldots, x_d)\|^2, \\
\phi_i(x) &:=& \widetilde{B}_{i}\big(\theta(x_1, \ldots, x_d), g_{1}(x), \ldots, g_{k}(x)), \quad i = 1, \ldots, k, \\
\phi_i(x) &:=& 0, \quad i = k + 1, \ldots, l, \\
\psi_j(x) &:=& \widetilde{C}_{j}\big(\theta(x_1, \ldots, x_d), g_{1}(x), \ldots, g_{k}(x), h_1(x), \ldots, h_m(x) \big), \quad j = 1, \ldots, m,
\end{eqnarray*}
have the desired properties.
\end{proof}

We are now ready to prove our second main result.
\begin{theorem}[Global Nichtnegativstellens\"atz] \label{Nichtnegativstellensatz-2}
Assume that $f$ is nonnegative on $S.$ If the regularity, strict complementarity and second-order sufficiency conditions hold at every zeros of the restriction of $f$ on $S,$ then there are definable $C^{p - 2}$-functions $\phi_i, \psi_j \colon \mathbb{R}^n \rightarrow \mathbb{R}$ for $i = 0, \ldots, l$ and $j = 1, \ldots, m,$ where each $\phi_i$ is a sum of squares of definable $C^{p - 2}$-functions, such that
$$f = \phi_0 + \sum_{i = 1}^l \phi_i g_i + \sum_{j = 1}^m \psi_j h_j.$$
\end{theorem}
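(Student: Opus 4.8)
The plan is to patch together the local representations furnished by Theorem~\ref{Nichtnegativstellensatz-1} by means of the definable partition of unity of Lemma~\ref{PartitionUnity}, exploiting crucially that the partition is \emph{squared}, so that membership in the quadratic module generated by the $g_i$ (over the sums of squares, modulo the $h_j$) is preserved.

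First I would show that the zero set $Z := \{x \in S \mid f(x) = 0\}$ is finite. At each $x^* \in Z$ the hypotheses (regularity, strict complementarity, second-order sufficiency) make $x^*$ a non-degenerate critical point of the restriction $f|_S$, as recalled in Section~\ref{SectionPreliminary}; since $f \ge 0$ on $S$ and $f(x^*) = 0$, this forces $x^*$ to be a strict local minimizer and hence isolated in $Z$. Thus $Z$ is a discrete definable set, so by o-minimality (Lemma~\ref{ConnectedComponents}) it is finite, say $Z = \{x^{(1)}, \ldots, x^{(N)}\}$.

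Next I would assemble a \emph{finite} definable open cover of $\mathbb{R}^n$ on each piece of which $f$ already admits a representation of the desired form. For $t = 1, \ldots, N$, Theorem~\ref{Nichtnegativstellensatz-1} supplies a neighbourhood $U_t$ of $x^{(t)}$ together with such a local representation. On the open set $\{f > 0\}$ one has $f = (\sqrt{f})^2$, a single square. On each $\{g_i < 0\}$ I would set $c := 1 + (f/g_i)^2$, a definable sum of squares (legitimate since $g_i \ne 0$ there), and verify the elementary inequality $f - c\,g_i > 0$, which yields $f = (\sqrt{f - c\,g_i})^2 + c\,g_i$. On each $\{h_j \ne 0\}$ one simply writes $f = (f/h_j)\,h_j$. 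A point lying in none of the $\{g_i < 0\}$ nor the $\{h_j \ne 0\}$ belongs to $S$; if moreover $f \le 0$ there then $f = 0$, so the point lies in $Z \subset U_1 \cup \cdots \cup U_N$. Hence these $N + l + m + 1$ open definable sets cover $\mathbb{R}^n$.

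Finally I would take a definable $C^{p-2}$ partition of unity $\{\theta_k\}$ subordinate to this cover with $\sum_k \theta_k^2 = 1$ (Lemma~\ref{PartitionUnity}). Writing the representation on the $k$-th open set as $f = \phi_0^{(k)} + \sum_i \phi_i^{(k)} g_i + \sum_j \psi_j^{(k)} h_j$ and multiplying through by $\theta_k^2$, each $\theta_k^2\phi_0^{(k)} = \sum_a (\theta_k q_a)^2$ and each $\theta_k^2\phi_i^{(k)}$ is again a sum of squares, and (since $\mathrm{supp}\,\theta_k \subset U_k$) extends by zero to a global definable $C^{p-2}$-function; summing over $k$ gives
$$ f = \sum_k \theta_k^2 f = \underbrace{\sum_k \theta_k^2 \phi_0^{(k)}}_{=:\,\phi_0} + \sum_{i=1}^l \Big( \sum_k \theta_k^2 \phi_i^{(k)} \Big) g_i + \sum_{j=1}^m \Big( \sum_k \theta_k^2 \psi_j^{(k)} \Big) h_j, $$
the required global identity with $\phi_0$ and the coefficients of the $g_i$ sums of squares. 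I expect the main obstacle to be the absence of compactness: on the unbounded pieces $\{g_i < 0\}$ and $\{f > 0\}$ one cannot take constant multipliers, so the key device is the explicit \emph{function-valued} multiplier $c = 1 + (f/g_i)^2$, which keeps $f - c\,g_i$ positive on the whole (possibly unbounded) region; the complementary subtlety is recognizing that it is precisely the squaring of the partition of unity that preserves the sum-of-squares structure under patching.
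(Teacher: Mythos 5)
Your proof is correct and takes essentially the same approach as the paper's: finiteness of the zero set via non-degeneracy plus o-minimality, the local representations of Theorem~\ref{Nichtnegativstellensatz-1} near each zero, explicit representations on $\{f > 0\}$, $\{g_i < 0\}$ and $\{h_j \neq 0\}$, and patching via the squared definable partition of unity of Lemma~\ref{PartitionUnity}. The only difference is cosmetic: on $\{g_i < 0\}$ you use the multiplier $c = 1 + (f/g_i)^2$ with $f = \bigl(\sqrt{f - c\,g_i}\bigr)^2 + c\,g_i$, whereas the paper uses the identity $f = \bigl(\tfrac{f+1}{2}\bigr)^2 - \bigl(\tfrac{f-1}{2}\bigr)^2$ with the square divided by $-g_i$; your variant even avoids the paper's minor slip of dividing by $-g_i$ for every index $i$ rather than only the one known to be negative on that chart.
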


\begin{proof}
Our assumptions yield that the definable set $f^{-1}(0) \cap S$ is discrete (see, for example, \cite[Corollary~3.2.30]{Jongen2000}). This, together with Lemma~\ref{ConnectedComponents}, implies that $f^{-1}(0) \cap S$ is a finite set, say $\{x^*_1, \ldots, x^*_N\}.$ In view of Theorem~\ref{Nichtnegativstellensatz-1}, for each $k = 1, \ldots, N,$ there exist a definable open neighbourhood $U_k$ of $x^*_k$ and definable $C^{p - 2}$-functions $\phi_{k, i}, \psi_{k, j} \colon U_k \to \mathbb{R}$ for $i = 0, 1, \ldots, l$ and $j = 1, \ldots, m,$ where each $\phi_{k, i}$ is a sum of squares of definable $C^{p - 2}$-functions, such that on $U_k$ we have
$$f = \phi_{k, 0} + \sum_{i = 1}^l \phi_{k, i} g_i + \sum_{j = 1}^m \psi_{k, j} h_j.$$

Since $f$ is positive on the set $S \setminus \{x^*_1, \ldots, x^*_N\},$ we can find a (definable) open set $U_{N + 1}$ containing $S \setminus \bigcup_{k = 1}^N U_k$ such that $f$ is positive on $U_{N + 1}.$ On the set $U_{N + 1},$ define the definable $C^p$-functions $\phi_{k, i}$ and $\psi_{k, j}$ by
\begin{eqnarray*}
\phi_{N+1, 0} &:=& f, \\
\phi_{N+1, i} &:=& 0  \quad \textrm{ for } \quad  i = 1, \ldots, l, \\
\psi_{N+1, j} &:=& 0 \quad \textrm{ for } \quad j = 1, \ldots, m.
\end{eqnarray*}
For $k = N + 2, \ldots, N + l + 1,$ on the definable open set $U_k := \{x \in \mathbb{R}^n \ | \ g_{k - N - 1}(x) < 0\},$ define the definable $C^p$-functions $\phi_{k, i}$ and $\psi_{k, j}$ by
\begin{eqnarray*}
\phi_{k, 0} &:=& \left( \frac{f + 1}{2} \right)^2, \\
\phi_{k, i} &:=& \left( \frac{f - 1}{2} \right)^2 \cdot \frac{1}{l(-g_i)}  \quad \textrm{ for } \quad  i = 1, \ldots, l, \\
\psi_{k, j} &:=& 0 \quad \textrm{ for } \quad j = 1, \ldots, m.
\end{eqnarray*}
For $k = N + l + 2, \ldots, N + l + m + 1,$ on the definable open set $U_k :=  \{x \in \mathbb{R}^n \ | \ h_{k - N - l - 1}(x) \ne 0\},$ define the definable $C^p$-functions $\phi_{k, i}$ and $\psi_{k, j}$ by
\begin{eqnarray*}
\phi_{k, 0} &:=& 0 \quad \textrm{ for } \quad i = 0, \ldots, l, \\
\psi_{k, j} &:=& \frac{f}{m h_j} \quad \textrm{ for } \quad  j = 1, \ldots, m.
\end{eqnarray*}

By definition, on the set $U_k,$ $k = N + 1, \ldots, N + l + m + 1,$ we have
$$f = \phi_{k, 0} + \sum_{i = 1}^l \phi_{k, i} g_i + \sum_{j = 1}^m \psi_{k, j} h_j.$$
Since $\{U_k\}_{k = 1, \ldots, N + l + m + 1}$ is a the family of definable open sets covering $\mathbb{R}^n,$ it follows from Lemma~\ref{PartitionUnity} that there exist definable $C^p$-functions $\theta_k \colon \mathbb{R}^n \to [0, 1],$ $k = 1, \ldots, N + l + m + 1,$ such that $\mathrm{supp\,} \theta_k \subset U_k$ and
$$\sum_{k = 1}^{N + l + m + 1} \big[\theta_k(x) \big]^2 = 1 \quad \textrm{ for all } \quad x \in \mathbb{R}^n.$$
For $k = 1, \ldots, N + l + m + 1,$ $i = 0, 1, \ldots, l,$ and $j = 1, \ldots, m,$ the functions $\theta_k^2 \phi_{k, i}$ and $\theta_k^2 \psi_{k, j}$ extend by $0$ to (definable) $C^{p - 2}$-functions over $\mathbb{R}^n.$ By construction, the functions $\theta_k^2 \phi_{k, i}$ are sums of squares of definable $C^{p - 2}$-functions.

Finally, on $\mathbb{R}^n$ we have
\begin{eqnarray*}
f 
&=& 1 \cdot f \  = \  \left(\sum_{k = 1}^{N + l + m + 1} \theta_k^2 \right) f  \ = \  \sum_{k = 1}^{N + l + m + 1} \theta_k^2 f 
\\
&=& \sum_{k = 1}^{N + l + m + 1} \left (\theta_k^2 \phi_{k, 0} + \sum_{i = 1}^l \theta_k^2  \phi_{k, i} g_i + \sum_{j = 1}^m \theta_k^2  \psi_{k, j} h_j \right) \\
&=& \sum_{k = 1}^{N + l + m + 1} \theta_k^2 \phi_{k, 0} + \sum_{i = 1}^l \left( \sum_{k = 1}^{N + l + m + 1} \theta_k^2  \phi_{k, i} \right) g_i + \sum_{j = 1}^m 
\left ( \sum_{k = 1}^{N + l + m + 1} \theta_k^2  \psi_{k, j} \right)  h_j.
\end{eqnarray*}
The proof is complete.
\end{proof}

\section{Global optimality conditions}\label{Section4}

In this section, we derive global optimality conditions which generalize the KKT~optimality conditions for nonlinear optimization. 

Let $f, g_1, \ldots, g_l, h_1, \ldots, h_m  \colon \mathbb{R}^n \to \mathbb{R}$ be definable $C^p$-functions ($p \ge 3$) and assume that
$$S := \{x \in \mathbb{R}^n \ | \ g_1(x) \ge 0, \ldots,  g_l(x) \ge  0, h_1(x)  = 0, \ldots, h_m(x)  = 0 \} \ne \emptyset.$$
Let $(x^*, \lambda, \nu) \in S \times \mathbb{R}_+^l \times \mathbb{R}^m$ be a vector satisfying the KKT~optimality conditions associated  with the problem $\min_{x \in S} f(x),$ that is
\begin{eqnarray*}
&&  \nabla  f({x^*})  - \sum_{i = 1}^{l} \lambda_i \nabla g_i({x^*}) - \sum_{j = 1}^{m} \nu_j \nabla h_j({x^*})  = 0, \\
&&  \lambda_i g_i({x^*}) = 0, \ \lambda_i \ge 0, \ \textrm{ for } \ i  = 1, \ldots, l.
\end{eqnarray*}
It follows that $x^*$ a stationary point of the Lagrangian function
$$L(x) := f({x})  - \sum_{i = 1}^{l} \lambda_i g_i({x}) - \sum_{j = 1}^{m} \nu_j h_j({x}) .$$
However, in general, $x^*$ is not a global minimizer of $L$ (and may not even be a local minimizer).

On the other hand, we have the following global optimality conditions, which is inspired by the work of Lasserre~\cite[Chapter~7]{Lasserre2015} (see also \cite[Subsection~7.4.5]{HaHV2017}).

\begin{theorem}\label{GlobalOptimalityConditions}
Assume that $f_* := \min_{x \in S} f(x) > -\infty.$ If the regularity, strict complementarity and second-order sufficiency conditions hold at every minimizers of the restriction of $f$ on $S,$ there are definable $C^{p - 2}$-functions $\phi_i, \psi_j \colon \mathbb{R}^n \rightarrow \mathbb{R}$ for $i = 0, \ldots, l$ and $j = 1, \ldots, m,$ where each $\phi_i$ is a sum of squares of definable $C^{p - 2}$-functions, such that
\begin{eqnarray} \label{EqnLasserre8}
f - f_* &=& \phi_0 + \sum_{i = 1}^l \phi_i g_i + \sum_{j = 1}^m \psi_j h_j.
\end{eqnarray}
Let $x^* \in S$ be a global minimizer of $f$ on $S.$ The following statements hold:
\begin{enumerate}
    \item [{\rm (i)}] $\phi_0(x^*) = 0,$ and $\phi_i(x^*) g_i(x^*) = 0$ and $\phi_i(x^*) \ge 0$ for all $i = 1, \ldots, l.$
    \item [{\rm (ii)}] $\nabla f(x^*) - \sum_{i = 1}^l \phi_i(x^*) \nabla  g_i(x^*) - \sum_{j = 1}^m \psi_j(x^*) \nabla h_j (x^*) = 0.$
    \item [{\rm (iii)}] $x^*$ is a global minimizer of the (generalized) Lagrangian function
    $$\mathscr{L}(x) := f(x) - f_* -  \sum_{i = 1}^l \phi_i(x) g_i(x) - \sum_{j = 1}^m \psi_j(x) h_j(x).$$
\end{enumerate}
\end{theorem}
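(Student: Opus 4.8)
The plan is to reduce the entire statement to the Global Nichtnegativstellensatz (Theorem~\ref{Nichtnegativstellensatz-2}) applied to the shifted function $f - f_*$, and then extract (i)--(iii) as elementary consequences of the resulting representation. First I would observe that $f - f_*$ is a definable $C^p$-function that is non-negative on $S$, since $f_* = \min_{x \in S} f(x)$. The zeros of the restriction of $f - f_*$ to $S$ are precisely the global minimizers of $f$ on $S$. Because $f_*$ is a constant, the gradient $\nabla(f - f_*) = \nabla f$ and the Hessian $\nabla^2(f - f_*) = \nabla^2 f$ agree with those of $f$; the Lagrange multipliers, and hence the Hessian of the Lagrangian, are unchanged, while regularity depends only on the constraint functions $g_i, h_j$. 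Consequently the regularity, strict complementarity and second-order sufficiency conditions hold at every zero of $f - f_*$ in $S$, so Theorem~\ref{Nichtnegativstellensatz-2} produces definable $C^{p-2}$-functions $\phi_i, \psi_j$, with each $\phi_i$ a sum of squares, satisfying \eqref{EqnLasserre8}.

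For part (i), I would evaluate \eqref{EqnLasserre8} at a global minimizer $x^*$. Since $f(x^*) = f_*$ and $h_j(x^*) = 0$, the left-hand side vanishes and the identity collapses to $0 = \phi_0(x^*) + \sum_{i=1}^l \phi_i(x^*) g_i(x^*)$. Each $\phi_i$ is a sum of squares, so $\phi_i(x^*) \ge 0$, and $g_i(x^*) \ge 0$ because $x^* \in S$; thus every summand is non-negative and their sum is $0$, forcing $\phi_0(x^*) = 0$ and $\phi_i(x^*) g_i(x^*) = 0$ for each $i$.

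For part (ii), the crucial point is that $p \ge 3$, so the $\phi_i$ and $\psi_j$ are of class $C^1$; differentiating \eqref{EqnLasserre8} and using $\nabla(f - f_*) = \nabla f$ gives $\nabla f = \nabla \phi_0 + \sum_i (g_i \nabla \phi_i + \phi_i \nabla g_i) + \sum_j (h_j \nabla \psi_j + \psi_j \nabla h_j)$. Evaluating at $x^*$, I would argue that every unwanted term vanishes: $\phi_0 \ge 0$ attains its global minimum $0$ at $x^*$, so $\nabla \phi_0(x^*) = 0$; for each active index (where $g_i(x^*) = 0$) the term $g_i(x^*) \nabla \phi_i(x^*)$ is zero, while for each inactive index (where $g_i(x^*) > 0$) part (i) gives $\phi_i(x^*) = 0$, whence the sum of squares $\phi_i$ attains a global minimum at $x^*$ and $\nabla \phi_i(x^*) = 0$; finally $h_j(x^*) = 0$ kills $h_j(x^*) \nabla \psi_j(x^*)$. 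What survives is exactly the asserted stationarity identity.

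For part (iii), I would substitute \eqref{EqnLasserre8} directly into the definition of $\mathscr{L}$: the terms $\sum_i \phi_i g_i$ and $\sum_j \psi_j h_j$ cancel, leaving $\mathscr{L}(x) = \phi_0(x)$ for all $x$. Since $\phi_0$ is a sum of squares it is non-negative everywhere, and by part (i) we have $\mathscr{L}(x^*) = \phi_0(x^*) = 0$, so $x^*$ is a global minimizer of $\mathscr{L}$. The main obstacle, such as it is, lies in part (ii): one must verify carefully, through the active/inactive case split, that each cross term $g_i(x^*)\nabla\phi_i(x^*)$ vanishes, using crucially that a sum of squares has vanishing gradient wherever it attains the value $0$; this differentiability is precisely where the strengthened hypothesis $p \ge 3$ (rather than $p \ge 2$) is needed.
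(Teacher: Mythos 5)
Your proposal is correct and follows essentially the same route as the paper: apply Theorem~\ref{Nichtnegativstellensatz-2} to $f - f_*$ (whose zeros in $S$ are exactly the global minimizers of $f$, with unchanged derivatives and multipliers), then obtain (i)--(iii) by evaluating, differentiating, and substituting the representation~\eqref{EqnLasserre8} at $x^*$. Your detailed treatment of (ii) --- the active/inactive case split and the observation that a $C^1$ sum of squares has vanishing gradient at any of its zeros, which is where $p \ge 3$ enters --- is precisely what the paper compresses into a single sentence.
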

\begin{proof}
The existence of functions $\phi_i$ and $\psi_j$ for which the equality~\eqref{EqnLasserre8} holds follows immediately from Theorem~\ref{Nichtnegativstellensatz-2}.

(i) From \eqref{EqnLasserre8} and the fact that $x^*$ is a global minimizer of $f$ on $S,$ we get
\begin{eqnarray*}
0 \ = \ f(x^*) - f_* &=& \phi_0(x^*) + \sum_{i = 1}^l \phi_i(x^*) g_i(x^*) + \sum_{j = 1}^m \psi_j(x^*) h_j(x^*),
\end{eqnarray*}
which in turn implies (i) because $g_i(x^*) \ge 0$ for $i = 1, \ldots, l,$ $h_j(x^*)  = 0$ for $j = 1, \ldots, m,$ and the functions $\phi_i$ are all sums of squares of definable $C^{p - 2}$-functions, hence nonnegative. 

(ii) Differentiating \eqref{EqnLasserre8}, using (i) and the fact that the functions $\phi_i$ are sums of squares of definable $C^{p - 2}$-functions yield (ii).

(iii) Since $\phi_0$ is a sum of squares of definable $C^{p - 2}$-functions, we have for all $x \in \mathbb{R}^n,$
$$\mathscr{L}(x) = f(x) - f_* -  \sum_{i = 1}^l \phi_i(x) g_i(x) - \sum_{j = 1}^m \psi_j(x) h_j(x) = \phi_0(x) \ge 0.$$
Note that $\mathscr{L}(x^*) = \phi_0(x^*) = 0.$ Therefore, $x^*$ is a global minimizer of $\mathscr{L}.$ 
\end{proof}

\begin{remark}{\rm 
Theorem~\ref{GlobalOptimalityConditions} implies the following facts.
\begin{enumerate}
\item[{\rm (i)}] The equality~\eqref{EqnLasserre8} can be interpreted as a {\em global optimality condition}. 

\item[{\rm (ii)}] The function $\mathscr{L}(x) := f(x) - f_* -  \sum_{i = 1}^l \phi_i(x) g_i(x) - \sum_{j = 1}^m \psi_j (x)h_j(x)$  
is a {\em generalized Lagrangian function}, with generalized Lagrange (definable functions) multipliers $((\phi_i), (\psi_j))$
instead of scalar multipliers $(\lambda, \nu) \in \mathbb{R}_+^l \times \mathbb{R}^m.$ It is a sum of squares of definable $C^{p - 2}$-functions (hence nonnegative on $\mathbb{R}^n$), vanishes at every global minimizer $x^* \in S,$ and so $x^*$ is also a global minimizer of the generalized Lagrangian function.

\item[{\rm (iii)}] The generalized Lagrange multipliers $((\phi_i), (\psi_j))$ provide a {\em certificate of global optimality} for $x^* \in S$ in the nonconvex case exactly as the Lagrange multipliers $(\lambda, \nu) \in \mathbb{R}_+^l \times \mathbb{R}^m$  provide a certificate in the convex case.
\end{enumerate}

We should also mention that in the KKT~optimality conditions, only the constraints $g_i(x) \ge 0$ that are active at $x^*$ have a possibly nontrivial associated Lagrange (scalar) multiplier $\lambda_i.$  Hence the nonactive constraints do not appear in the Lagrangian function $\mathscr L$ defined above. In contrast, in the global optimality condition \eqref{EqnLasserre8}, every constraint $g_i(x) \ge 0$ has a possibly nontrivial Lagrange multiplier $\phi_i(x).$ But if $g_i(x^*) > 0$ then necessarily $\phi_i(x^*) = 0 = \lambda_i,$ as in the KKT~optimality conditions.
}\end{remark}

\bibliographystyle{abbrv}

\end{document}